\numberwithin{equation}{section}
\theoremstyle{plain}
\newtheorem{theorem}{Theorem}[section]
\newtheorem{corollary}[theorem]{Corollary}
\newtheorem{definition}[theorem]{Definition}
\newtheorem{proposition}[theorem]{Proposition}
\newtheorem{remark}[theorem]{Remark}
\newtheorem{example}[theorem]{Example}
\newtheorem*{Theorem 1}{Theorem 1}
\newtheorem*{Theorem 2}{Theorem 2}
\begin{document}
\vskip 5.1cm
\title{A note on two types of Lyapunov exponents and entropies for $\mathbb{Z}^k$-actions\footnotetext{\\
\emph{ 2010 Mathematics Subject Classification}: 37A35, 37C85, 37H15.\\
\emph{Keywords and phrases}:  $\mathbb{Z}^k$-action; random Lyapunov exponent; directional Lyapunov exponent; random entropy; directional entropy.\\
The author is supported by  NSFC(No:11371120). }}
 \author {Yujun Zhu\\
\small {School of Mathematical Sciences}\\
\small {Xiamen University, Xiamen, 361005, P. R. China}}
\date{}
\maketitle
\begin{center}
\begin{minipage}{130mm}
{\bf Abstract}:
In this paper, two types of Lyapunov exponents: random Lyapunov exponents and directional Lyapunov exponents, and the corresponding entropies: random entropy and directional entropy, are considered for smooth $\mathbb{Z}^k$-actions. The close relation among these quantities are investigated and the formulas of them are given via the Lyapunov exponents of the generators.

\end{minipage}
\end{center}

\section{Introduction and preliminaries}

Let $M$ be a $d$-dimensional closed (i.e., compact and boundaryless) Riemannian manifold, $f:M \longrightarrow M$ a diffeomorphism and $\mu$ an $f$-invariant Borel probability measure on $M$. There are various of ways of measuring the complexity of the $\mathbb{Z}$-action generated by iterating $f$.

A geometric way of measuring the complexity is to estimate the exponential rate at which nearby orbits are separated. These rates of divergence are given by the Lyapunov exponents of $f$.
By the Multiplicative Ergodic Theorem, there exists an invariant measurable set $\Gamma_f$ with full $\mu$ measure such that
for any $x\in \Gamma_f$ there exist a decomposition $T_xM=\bigoplus_{j=1}^{s(f,x)}E_j(x)$
into subspaces $E_j(x)$ of dimension $d_j(f,x)$ and numbers
$\lambda_1(f,x)<\cdots<\lambda_{s(f,x)}(f,x)$ which satisfy the following properties: $Df(x)E_j(x)=E_j(f(x))$  for each $1\le j\le s(f,x)$ and
$$
\lim_{n\longrightarrow \pm \infty}\frac{1}{n}\log\|Df^n(x)u\|=\lambda_j(f,x)
$$
for any nonzero vector $u\in E_j(x)$. The above numbers $\lambda_1(f,x),\cdots, \lambda_{s(f,x)}(f,x)$ are called the \emph{Lyapunov exponents} of $f$ at $x$, and the collection $\{(\lambda_j(f,x),d_j(f,x)): 1\le j\le s(f,x), x\in \Gamma_f\}$ is called the \emph{spectrum} of $f$.
Entropies, including measure-theoretic entropy $h_{\mu}(f)$ and topological entropy $h(f)$, are important invariants which measure the complexity of $f$ from measure-theoretic and topological points of view. The topological entropy measures the exponential growth rate of the number of orbits
distinguishable with limit precision, and the measure-theoretic
entropy describes the information creation rate of the time evolution.

The relations among Lyapunov exponents, measure-theoretic entropy $h_{\mu}(f)$ and topological entropy $h(f)$ have been well investigated. It is well known that $h_{\mu}(f)$ and $h(f)$ can be related by the variational principle $ h(f)=\sup\{h_\mu(f): \mu\;\text{is}\; f-\text{invariant}\}$.
When $f$ is $C^1$, there is an inequality relating $h_{\mu}(f)$ and the Lyapunov exponents,
\begin{equation}\label{Rue}
 h_{\mu}(f)\le\int_M \sum_{\lambda_j(f, x)>0}\lambda_j(f, x)d_j(f, x)d\mu(x).
\end{equation}
 It is called the \emph{Ruelle's inequality}. When $f$ is $C^{1+\alpha}, \alpha>0$, and $\mu$ is absolutely continuous with respect to the Lebesgue measure (or $\mu$ is a SRB measure, i.e., $\mu$ has the absolutely continuous conditional measures on the unstable manifolds), then the equality in the above inequality holds, i.e.,
\begin{equation}\label{Pes}
 h_{\mu}(f)=\int_M \sum_{\lambda_j(f, x)>0}\lambda_j(f, x)d_j(f, x)d\mu(x).
\end{equation}
 It is called the \emph{Pesin's formula}. Moreover, when there is no hypothesis of absolute continuity on $\mu$, we have a so-called the \emph{Dimension formula}
\begin{equation}\label{Dim}
 h_{\mu}(f)=\int_M \sum_{\lambda_j(f, x)>0}\lambda_j(f, x)\gamma_j(f, x)d\mu(x),
\end{equation}
 where $\gamma_j(f, x)$ is, roughly speaking, the dimension of $\mu$ in the direction of the subspace $E_j(x)$. For more detailed information we refer to \cite{Ruelle}, \cite{Pesin}, \cite{Ledrappier-Young} and \cite{Katok}.

For the higher rank
group actions, especially  $\mathbb{Z}^k$-actions, $k>1$, on $M$, i.e, the dynamical systems generated
by $k$ commuting diffeomorphisms, the dynamics become much more complicated (see \cite{Schmidt}).
Let $T:\mathbb{Z}^k\longrightarrow$ Diff$^r(M, M), r\geq 1$, be a $C^r$ $\mathbb{Z}^k$-action on $M$, where Diff$^r(M, M)$ is the space of $C^r$ diffeomorphisms equipped with the $C^r$-topology. We denote the collection of generators of $T$ by
\begin{equation}\label{generator}
\mathcal{G}=\{f_i=T(\vec{e}_i)=T^{\vec{e}_i}:1\leq i\leq k\},
\end{equation}
where $\vec{e}_i=(0,\cdots,\stackrel{(i)}{1},\cdots,0)$ is the standard $i$-th generator of $\mathbb{Z}^k$. Let $\mu$ be a \emph{$T$-invariant} Borel probability measure on $M$, i.e., $\mu$ is $f_i$-invariant for each $i$. Based on the need in the study of lattice statistical mechanics, Ruelle \cite{Ruelle1} introduced a version of entropy for
$\mathbb{Z}^k$-actions, $k\geq2$. Clearly, both of the topological entropy $h(T)$ and metric entropy $h_{\mu}(T)$ in Ruelle's version are zero because the topological entropy $h(f_i)$ and metric entropy $h_{\mu}(f_i)$  are both finite for each $i$ (see \cite{Schmidt}). Therefore this version of entropy is in a sense coarse to reveal the complexity of $T$.

In recent years, in order to investigate the diverse and intricate dynamics of $T$, various of entropy-type invariants to measure the complexity of $T$ in different levels and from different points of views are introduced and investigated, such as directional entropy (\cite{Milnor}, \cite{Park} and \cite{Boyle}), and \cite{Zhu}), Fried average entropy (\cite{Fried} and \cite{Katok1}), slow entropy (\cite{Katok2} and \cite{Katok1}) and Friedland entropy (\cite{Friedland}, \cite{Geller}, \cite{Einsiedler}), etc. Another type of entropy to describe the chaotic behavior of $T$ is its random entropy (or say, the entropy of the random $\mathbb{Z}^k$-actions induced by $T$). Roughly speaking, a random $\mathbb{Z}^k$-action $\varphi$ of $T$ is a system generated by the random compositions of the generators of $T$ according to some distribution $m$.

In this paper, we will mainly consider random entropy and directional entropy of $T$. Random entropy is the average entropy of the nonautonomous dynamical systems in $\varphi$, while directional entropy is the entropy of the sub-dynamical system along a direction. We will see that these two quantities indeed have internal relations. In section 2, we will consider the random Lyapunov exponents and random entropy of $T$ and formulate them respectively via the Lyapunov exponents of the generators (Theorem \ref{thm1} and Corollary \ref{coro}). In Section 3, we will apply these formulas to investigate the directional Lyapunov exponents and the measure-theoretic directional entropy for $T$ and give the corresponding formulas for them via the Lyapunov exponents of the generators (Theorem \ref{thm2}), respectively.

In the end of this section, we recall some fundamental properties of  $T$ from \cite{Hu}.

\begin{proposition}[\cite{Hu}]\label{prop}
There exists a measurable set $\Gamma\subset M$ with $f_i(\Gamma)=\Gamma$ for each $i$ (in this case we call $\Gamma$ is $T$-\emph{invariant}) and $\mu(\Gamma)=1$, such that for any $x\in \Gamma$, there exist a decomposition of tangent space into subspaces
\begin{equation}\label{splitting}
T_xM=\bigoplus_{j=1}^{s(x)} E_j(x)
\end{equation}
(where \emph{dim}$E_j(x)=d_j(x)$, $s(x)$ and $d_j(x)$ are all measurable and $\sum_{j=1}^{s(x)}d_j(x)=d$) and numbers  $\lambda_{i,j}(x), 1\le i\le k, 1\le j\le s(x)$, satisfying the following properties:

(1) for each $E_j(x)$, we have the following invariance properties
\begin{equation}\label{inv}
Df_i(x)E_j(x)=E_j(f_i(x))\;\;\text{and}\;\; \lambda_{i,j}(f_{i'}(x))=\lambda_{i,j}(x),
\end{equation}
where $1\le i,i'\le k$.
and

(2) for each $t_1,\cdots,t_k\in \mathbb{Z}$ and each
 $1\le j\le s(x)$,
\begin{equation}\label{Lya}
\lim_{n\longrightarrow \pm \infty}\frac{1}{n}\log\|D(f_1^{t_1}\circ\cdots \circ f_k^{t_k})^n(x)u\|=\sum_{r=1}^kt_i\lambda_{r,j}(x)
\end{equation}
for any $0\neq u\in E_j(x)$.
\end{proposition}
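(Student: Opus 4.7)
The plan is to reduce this proposition to repeated applications of the standard Multiplicative Ergodic Theorem for single diffeomorphisms, together with the commutativity of the generators. First, I would apply Oseledets' theorem to each $f_i$ individually to obtain, for $\mu$-a.e.\ $x$, a splitting $T_xM=\bigoplus_\ell F^{(i)}_\ell(x)$ into Oseledets subspaces with exponents $\chi^{(i)}_\ell(x)$; let $\Gamma^{(i)}$ denote the corresponding full-measure set. Since $\mu$ is $T$-invariant, the intersection $\tilde\Gamma=\bigcap_{i=1}^k \Gamma^{(i)}$ has full measure, and after removing a further null set I can assume $\tilde\Gamma$ is $T$-invariant.

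The key step is to exploit commutativity $f_i\circ f_{i'}=f_{i'}\circ f_i$ to show that the Oseledets splitting of each $f_i$ is preserved by $Df_{i'}$ for every $i'$. Concretely, for $u\in F^{(i)}_\ell(x)$, the chain rule and commutativity give
$$\frac{1}{n}\log\|Df_i^n(f_{i'}(x))\,Df_{i'}(x)u\|=\frac{1}{n}\log\|Df_{i'}(f_i^n(x))\,Df_i^n(x)u\|,$$
and the contribution $\frac{1}{n}\log\|Df_{i'}(f_i^n(x))\|$ is sub-exponential by the Birkhoff ergodic theorem applied to the $\log$-integrable cocycle $\log^+\|Df_{i'}^{\pm1}\|$. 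Hence $Df_{i'}(x)u$ is an Oseledets vector of $f_i$ at $f_{i'}(x)$ with the same exponent $\chi^{(i)}_\ell(x)$, which yields both $Df_{i'}(x)F^{(i)}_\ell(x)=F^{(i)}_\ell(f_{i'}(x))$ (equality by dimension and the analogous argument for $f_{i'}^{-1}$) and the invariance of the exponent function $\chi^{(i)}_\ell\circ f_{i'}=\chi^{(i)}_\ell$. I would then build the common refinement by letting $E_j(x)$ enumerate the non-trivial intersections $\bigcap_{i=1}^k F^{(i)}_{\ell_i}(x)$ over multi-indices $(\ell_1,\ldots,\ell_k)$; pairwise $Df_i$-invariance of these factors and a direct dimension count show that the $E_j(x)$ direct-sum to $T_xM$. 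Each $E_j(x)$ carries the vector of exponents $\lambda_{i,j}(x):=\chi^{(i)}_{\ell_i}(x)$, and property (1) is immediate. Measurability of $s(x)$, $d_j(x)$ and the $E_j(x)$ is inherited from the measurable selection in each of the classical Oseledets decompositions.

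For property (2), commutativity and the chain rule give, for $u\in E_j(x)\setminus\{0\}$,
$$D(f_1^{t_1}\circ\cdots\circ f_k^{t_k})^n(x)u=Df_1^{nt_1}(y_1)\circ\cdots\circ Df_k^{nt_k}(y_k)u,$$
where each $y_r$ lies on the $T$-orbit of $x$. Since $u$ stays inside a joint eigenspace at every intermediate point (by property (1)) and each factor $Df_r^{nt_r}$ acts there with $f_r$-exponent $t_r\lambda_{r,j}(x)$, taking logarithms, dividing by $n$, and controlling the sub-exponential inter-factor distortions by the tempering argument used above yields the stated formula. I expect the main obstacle to be precisely this simultaneous control of tempered distortion factors, arising from the derivative of one generator evaluated along orbits of another; this is handled by applying Pesin's tempering procedure to each generator and using the $T$-invariance of the resulting tempering kernels furnished by the previous step, which lets them be transported consistently along all commuting directions.
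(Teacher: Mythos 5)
The paper itself gives no proof of this proposition: it is quoted verbatim from Hu's paper \cite{Hu}. Your overall plan --- apply the one-generator Oseledets theorem to each $f_i$, use commutativity to show each $Df_{i'}$ carries the $f_i$-Oseledets splitting and exponents along, pass to a common refinement, and then prove the additivity formula (2) --- is the standard route and is essentially Hu's. The invariance step is sound: from $Df_i^n(f_{i'}(x))Df_{i'}(x)=Df_{i'}(f_i^n(x))Df_i^n(x)$ and the fact that $\|Df_{i'}^{\pm 1}\|$ is uniformly bounded on the compact $M$ (Birkhoff is not even needed), one gets $Df_{i'}(x)F^{(i)}_\ell(x)=F^{(i)}_\ell(f_{i'}(x))$ and $\chi^{(i)}_\ell\circ f_{i'}=\chi^{(i)}_\ell$ on a full-measure $T$-invariant set.

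There are, however, two genuine gaps. First, the ``direct dimension count'' for the refinement does not work as stated: pairwise invariance of the splittings is a purely linear-algebraic condition at each point and does not force the intersections $\bigcap_i F^{(i)}_{\ell_i}(x)$ to span (the identity preserves any two transverse line-splittings of $\mathbb{R}^2$ whose pairwise intersections are trivial). You must use the dynamics: e.g.\ decompose $v\in F^{(1)}_\ell(x)$ along the $F^{(2)}_m(x)$; because the $F^{(2)}$-projections intertwine the $f_1$-cocycle (the $F^{(2)}$-splitting being $Df_1$-invariant) and their norms grow sub-exponentially along $f_1$-orbits, each component has forward exponent $\le\chi^{(1)}_\ell$ and backward exponent $\le-\chi^{(1)}_\ell$, hence lies in $F^{(1)}_\ell(x)$ by the two-sided characterization of Oseledets spaces; equivalently, restrict the $f_2$-cocycle to the $Df_2$-invariant subbundle $F^{(1)}_\ell$ and reapply the MET, then iterate over the generators. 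Second, and more seriously, the crux of the proposition is (2), and your closing claim that Pesin's tempering kernels are ``$T$-invariant'' or can be ``transported consistently along all commuting directions'' is unjustified and false in general: the tempering function produced for $f_r$ is controlled only along $f_r$-orbits, whereas in the product $Df_1^{nt_1}(y_1)\circ\cdots\circ Df_k^{nt_k}(y_k)$ each kernel must be evaluated at base points $y_r$ moving along the orbits of the \emph{other} generators, where a merely measurable kernel can grow badly (Birkhoff gives sub-linear growth of its logarithm only under an integrability hypothesis you have not secured). This simultaneous control is precisely what makes Hu's theorem non-trivial. Workable substitutes: the Egorov-plus-Birkhoff visit-frequency argument that this paper itself uses to prove Theorem 2.1; or an integral/Kingman argument --- on the $g$-invariant bundle $E_j$ with $g=f_1^{t_1}\circ\cdots\circ f_k^{t_k}$, bound $\int\log\|Dg^n|_{E_j}\|\,d\mu$ by $\sum_r\int\log\|Df_r^{nt_r}|_{E_j}\|\,d\mu$ using $T$-invariance of $\mu$, and do the same for $g^{-1}$, pinching every exponent of $g$ on $E_j$ to $\sum_r t_r\lambda_{r,j}$ (with ergodic decomposition in the non-ergodic case); or the construction of genuinely joint Lyapunov norms for commuting diffeomorphisms as in \cite{Hu} and \cite{Kalinin}. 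As written, your final step asserts the key estimate rather than proving it.
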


Note that for each $1\le i\le k$ and $x\in \Gamma$, the following two collections
 $$
 \{\lambda_j(f_i,x): 1\le j\le s(f_i,x)\}\text{ and }\{\lambda_{i,j}(x): 1\le j\le s(x)\}
 $$
coincide with each other. We call the collection
$$
\{(\lambda_{i,j}(x),d_j(x)): 1\le i\le k, 1\le j\le s(x), x\in \Gamma\}
$$
 the \emph{spectrum} of $T$.
 When the measure $\mu$  is ergodic with respect to $T$, i.e., $\mu$  is ergodic with respect to each $f_i$, then the above $\lambda_{i,j}(x),d_j(x)$ and $s(x)$ are all constant a.s., which are then denoted by $\lambda_{i,j},d_j$ and $s$ respectively.

\section{Random  Lyapunov exponents and random entropy}

Let $T$ be a $C^1$ $\mathbb{Z}^k$-action with the generators as in (\ref{generator}) and $\mu$ a $T$-invariant Borel probability measure on $M$. The following basic notions are derived from Kifer \cite{Kifer} and Liu \cite{Liu} in which the ergodic theory of general random dynamical systems are systematically investigated. Let
\begin{equation}\label{Omega}
\Omega=\mathcal{G}^{\mathbb{Z}}=\prod_{-\infty}^\infty \mathcal{G}
\end{equation}
be the infinite product of $\mathcal{G}$, endowed with the product topology and the product Borel $\sigma$-algebra $\mathcal{A}$,
and let $\sigma$ be the left shift operator on $\Omega$ which is defined by $(\sigma\omega)_n=\omega_{n+1}$ for $\omega=(\omega_n)\in \Omega$. Given $\omega=(\omega_n)\in \Omega$,
we write $\varphi_{\omega}=\omega_0$ and
$$
\varphi_{\omega}^n:=\left\{\begin{array}{ll}
\varphi_{\sigma^{n-1}\omega}\circ\cdots\circ \varphi_{\sigma\omega}\circ \varphi_{\omega} \quad &\;\;n>0\\
id \quad &\;\;n=0\\
\varphi_{\sigma^{n}\omega}^{-1}\circ\cdots\circ \varphi_{\sigma^{-2}\omega}^{-1}\circ \varphi_{\sigma^{-1}\omega}^{-1} \quad &\;\;n<0.
\end{array}\right.
$$
Clearly, each $\omega$ induces a nonautonomous dynamical system generated by the sequence of diffeomorphisms $\{\varphi_{\sigma^i\omega}\}_{i\in \mathbb{Z}}$.
Let  $m$ be a probability measure on $\mathcal{G}$. We can define a probability
measure $\mathbf{P}_{m}=m^{\mathbb{Z}}$ on $\Omega$ which is invariant and ergodic with respect
to $\sigma$. By the induced $C^r, r\geq 1,$ \emph{random $\mathbb{Z}^k$-action $\varphi$ over $(\Omega, \mathcal{A}, \mathbf{P}_{m}, \sigma)$} we  mean the  system generated by the randomly composed maps $\varphi_{\omega}^n, n\in \mathbb{Z}$. It is also called a $C^r $  \emph{i.i.d. (i.e., independent and identically distributed) random $\mathbb{Z}^k$-action}. We are interested in dynamical behaviors of these actions
for $\mathbf{P}_{m}$-a.e. $\omega$ or on the average on $\omega$.

There is a natural skew product transformation $\Phi: \Omega\times M\longrightarrow \Omega\times M$ over $(\Omega, \sigma)$ which is defined by
\begin{equation}\label{Phi}
\Phi(\omega, x)=(\sigma\omega, \varphi_{\omega}(x)).
\end{equation}

From the Multiplicative Ergodic Theorem for random dynamical systems (see, Theorem 3.2 in Chapter 1 of \cite{Liu}, for example) and Proposition \ref{prop}, there exists a Borel set $\Lambda\subset \Omega\times \Gamma$ with $\mathbf{P}_{m}\times \mu(\Lambda)=1, \Phi(\Lambda)\subset \Lambda$ such that for each $(\omega, x)\in \Lambda$, there exist a decomposition of tangent space into subspaces as (\ref{splitting}), i.e.,
$$
T_xM=\bigoplus_{j=1}^{s(x)} E_j(x)
$$
 and numbers  $\lambda_j(\omega, x),  1\le j\le s(x)$, satisfying that for $0\neq u\in E_j(x), 1\le j\le s(x)$,
\begin{equation}\label{randomLya}
\lim_{n\longrightarrow \pm \infty}\frac{1}{n}\log\|D\varphi_{\omega}^n(x)u\|=\lambda_j(\omega, x).
\end{equation}
We call $\lambda_j(\omega, x), 1\le j\le s(x)$, the \emph{$j$-th Lyapunov exponents} of $\varphi$, or the \emph{$j$-th random Lyapunov exponents} of $T$, at $(\omega, x)$. The collection $\{(\lambda_j(\omega, x),d_j(x)) : \omega\in\Omega, x\in \Gamma, 1\le j\le s(x)\}$ is called the \emph{spectrum} of $\varphi$. In fact, these Lyapunov exponents are non-random (even for a general $C^1$ i.i.d. random dynamical system $\psi$ which is defined by replacing ``$\mathcal{G}$" by ``Diff$^1(M)$ in (\ref{Omega})). This was proved in \cite{Kifer} by applying ergodic decomposition of stationary measures. In the following, we will apply Birkhoff's Ergodic Theorem to express  $\lambda_j(\omega, x)$ in $\lambda_{i,j}(x)$, and hence obtain the  non-randomness of $\lambda_j(\omega, x)$ for random $\mathbb{Z}^k$-actions.

\begin{theorem}\label{thm1}
Let $T$ be a $C^1$ $\mathbb{Z}^k$-action, $\mu$ a $T$-invariant Borel probability measure on $M$ and $\varphi$ a random $\mathbb{Z}^k$-action over $(\Omega, \mathcal{A}, \mathbf{P}_{m}, \sigma)$.
Then for $\mathbf{P}_{m}\times \mu$ a.e. $(\omega, x)$ and each $1\le j\le s(x)$, we have
\begin{equation}\label{exponent2}
\lambda_j(\omega, x)=\sum_{i=1}^k m_i \lambda_{i,j}(x),
\end{equation}
where $m_i=m(\{f_i\})$.
\end{theorem}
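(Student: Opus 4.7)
The plan is to combine the chain rule, the commutativity of the generators of $T$, and Birkhoff's ergodic theorem applied both to the Bernoulli shift $(\sigma,\mathbf{P}_m)$ and to each individual generator $f_i$. Since $T$ is a $\mathbb{Z}^k$-action, the generators pairwise commute, so one may reorder the random composition: $\varphi_\omega^n=f_1^{N_1(n,\omega)}\circ\cdots\circ f_k^{N_k(n,\omega)}$, where $N_i(n,\omega)=\#\{0\le\ell<n:\omega_\ell=f_i\}$. Because $(\sigma,\mathbf{P}_m)$ is a Bernoulli shift, hence ergodic, Birkhoff applied to the indicator of $\{\omega_0=f_i\}$ gives $N_i(n,\omega)/n\to m_i$ for $\mathbf{P}_m$-a.e.\ $\omega$.

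\textbf{Chain rule on the reordered composition.} For a nonzero $u\in E_j(x)$, I would apply the chain rule to the ordered factorization above. The $Df_i$-invariance of $E_j$ from (\ref{inv}) guarantees that at every stage the transported vector stays inside the Oseledets subbundle $E_j$, yielding the telescoping identity
\[
\log\|D\varphi_\omega^n(x)u\|-\log\|u\|=\sum_{i=1}^k\log\frac{\|Df_i^{N_i(n,\omega)}(y_i)v_i\|}{\|v_i\|},
\]
where $y_i=f_{i+1}^{N_{i+1}}\circ\cdots\circ f_k^{N_k}(x)$ lies in the $T$-orbit of $x$ and $v_i\in E_j(y_i)$. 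The MET for $f_i$ (the specialization of (\ref{Lya}) to $t_i=1$ and $t_{i'}=0$ for $i'\ne i$) together with the $T$-invariance $\lambda_{i,j}(y_i)=\lambda_{i,j}(x)$ from (\ref{inv}) yields $\frac{1}{N_i}\log\frac{\|Df_i^{N_i}(y_i)v_i\|}{\|v_i\|}\to\lambda_{i,j}(x)$ for each $i$ with $m_i>0$. Multiplying by $N_i/n\to m_i$ and summing gives $\frac{1}{n}\log\|D\varphi_\omega^n(x)u\|\to\sum_{i=1}^k m_i\lambda_{i,j}(x)$, which coincides with $\lambda_j(\omega,x)$ by (\ref{randomLya}).

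\textbf{Main obstacle.} The MET for a fixed map provides only pointwise-in-base-point convergence, whereas the intermediate points $y_i(n,\omega)$ vary with $n$ and $\omega$, so extracting a uniform rate is the delicate point. I would handle this by recasting the argument through Birkhoff's ergodic theorem for the skew product $\Phi$ on $(\Omega\times M,\mathbf{P}_m\times\mu)$ applied to the additive cocycle $F(\omega,x):=\log|\det(D\varphi_\omega(x)|_{E_j(x)})|$, whose Birkhoff sum along $\Phi$ equals $\log|\det(D\varphi_\omega^n(x)|_{E_j(x)})|$ by the chain rule and $E_j$-invariance. Birkhoff's theorem produces an almost-sure limit that the random MET identifies with $d_j(x)\lambda_j(\omega,x)$, while the commutativity-plus-SLLN calculation above identifies this same limit with $d_j(x)\sum_{i=1}^k m_i\lambda_{i,j}(x)$; dividing by $d_j(x)>0$ delivers (\ref{exponent2}) and, as a byproduct, the announced non-randomness of $\lambda_j(\omega,x)$.
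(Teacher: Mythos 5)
You correctly isolate the crux: after reordering by commutativity, the intermediate base points $y_i(n,\omega)$ move with $n$ and $\omega$, and the Multiplicative Ergodic Theorem for a fixed generator $f_i$ gives no uniformity in the base point, so the convergence $\frac{1}{N_i}\log\|Df_i^{N_i}(y_i)v_i\|\rightarrow\lambda_{i,j}(x)$ cannot simply be invoked. The problem is that your proposed repair does not remove this obstacle. Passing to the additive cocycle $F(\omega,x)=\log\vert\det(D\varphi_\omega(x)\vert_{E_j(x)})\vert$ and applying Birkhoff along the skew product $\Phi$ only re-establishes the \emph{existence} of the limit $\lim_n\frac1n\log\vert\det(D\varphi^n_\omega(x)\vert_{E_j(x)})\vert=d_j(x)\lambda_j(\omega,x)$, which is already supplied by the random MET; the \emph{value} of that limit is then, in your own words, identified by ``the commutativity-plus-SLLN calculation above'', i.e.\ by exactly the argument whose gap you had just flagged (the same moving-base-point issue reappears verbatim for the determinants). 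So the identification of $\mathbb{E}[F\mid\mathcal{I}_\Phi]$ with $d_j(x)\sum_{i=1}^k m_i\lambda_{i,j}(x)$ is asserted rather than proved, and that identification is the entire content of the theorem. Note also that no ergodicity is assumed, so you cannot shortcut by computing $\int F\,d\mathbf{P}_m\times\mu=\sum_i m_i\int d_j\lambda_{i,j}\,d\mu$: an integral identity does not give the pointwise formula.

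Two ways to close the gap. The paper's route: fix $\varepsilon,\delta>0$, use Egorov to get $\Gamma_1$ with $\mu(\Gamma_1)\ge1-\frac{\varepsilon}{2}$ on which $\frac1l\log\|Df_i^l(x)u\|$ lies within $\delta$ of $\lambda_{i,j}(x)$ uniformly in $i,j$, $x\in\Gamma_1$ and unit $u\in E_j(x)$ for all large $l$; pass to the action generated by the powers $f_i^l$ (whose random exponents are $l\lambda_j(\omega,x)$); control, via Birkhoff for the skew product and Egorov again, the frequency of visits of the orbit to the complement of $\Gamma_1$ by $2\sqrt{\varepsilon}\,n$; estimate each good block by $l(\lambda_{i,j}(x)\pm\delta)$ (here the invariance $\lambda_{i,j}(f_{i'}x)=\lambda_{i,j}(x)$ lets everything be evaluated at $x$, sidestepping the moving base point) and each bad block crudely by $l\log\|T\|$; and finally apply Birkhoff on the Bernoulli base to the function recording which generator occurs, letting $\delta,\varepsilon\rightarrow0$. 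Alternatively, your determinant cocycle can be salvaged, but it needs a genuine extra input you do not mention: the description of the $\Phi$-invariant $\sigma$-algebra of an i.i.d.\ random system (mod $0$, sets of the form $\Omega\times A$ with $A$ invariant under every $f_i$ with $m_i>0$, as in Kifer's book), which combined with $\mathbb{E}_\mu[F_i\mid\mathcal{I}_{f_i}]=d_j\lambda_{i,j}$ and the $T$-invariance of $d_j\lambda_{i,j}$ gives $\mathbb{E}[F\mid\mathcal{I}_\Phi]=\sum_i m_i d_j\lambda_{i,j}$. As written, the proposal has a genuine gap at its central step.
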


\begin{proof}

Let $l$ be a positive integer, $\widetilde{T}$ the $\mathbb{Z}^k$-action with the generators $f_i^l, 1\le i\le k$ and $\widetilde{\varphi}$ the corresponding random $\mathbb{Z}^k$-action over $(\widetilde{\Omega}, \widetilde{\mathcal{A}}, \widetilde{\mathbf{P}}_{m}, \widetilde{\sigma})$, where $\widetilde{\Omega}=\{f^l_i\}^{\mathbb{Z}}$ and $\widetilde{\mathcal{A}}, \widetilde{\mathbf{P}}_{m}, \widetilde{\sigma}$ are the counterparts of  $\mathcal{A}, \mathbf{P}_{m}, \sigma$, respectively. Please note that for each $\omega=(\omega_n)\in \Omega$ with $\omega_n=f_{i_n}\in \mathcal{G}$, there is a corresponding element $\widetilde{\omega}=(\widetilde{\omega}_n)\in \widetilde{\Omega}$ with $\widetilde{\omega}_n=f_{i_n}^l$, and vice versa. If we denote $\widetilde{\Phi}$ the counterpart of $\Phi$ and $\widetilde{\Lambda}=\{(\widetilde{\omega},x):(\omega,x)\in \Lambda\}$, then, clearly, $\widetilde{\mathbf{P}}_{m}\times \mu(\widetilde{\Lambda})=1, \widetilde{\Phi}(\widetilde{\Lambda})\subset \widetilde{\Lambda}$.

By (\ref{randomLya}) and (\ref{inv}), for any $(\omega, x)\in \Lambda, 1\le j\le s(x)$ and positive integer $l$,
$$
\lim_{n\longrightarrow \pm \infty}\frac{1}{n}\log\|D[\varphi_{\omega}^n]^l(x)u\|=l\lambda_j(\omega, x)
$$
 for any nonzero $u\in E_j(x)$. From the observation $\widetilde{\varphi}_{\widetilde{\omega}}^n=[\varphi_{\omega}^n]^l$, we get
\begin{equation}\label{power1}
\lim_{n\longrightarrow \pm \infty}\frac{1}{n}\log\|D\widetilde{\varphi}_{\widetilde{\omega}}^n(x)u\|=l\lambda_j(\omega, x)
\end{equation}
immediately. Hence, the $j$-th Lyapunov exponent of $\widetilde{\varphi}$ at $(\widetilde{\omega}, x)\in \widetilde{\Lambda}$ exist. If we denote it by  $\widetilde{\lambda}_j(\widetilde{\omega}, x), 1\le j\le s(x)$, then by (\ref{power1}),
\begin{equation}\label{power}
\widetilde{\lambda}_j(\widetilde{\omega}, x)=l\lambda_j(\omega, x).
\end{equation}

Take $\varepsilon>0$. Since for any $x\in \Gamma$ and each $1\le i\le k, 1\le j\le s(x)$, the following limit
$$
\lim_{n\longrightarrow \pm \infty}\frac{1}{n}\log\|Df^n_i(x)u\|=\lambda_{i,j}(x)
$$
is uniform with respect to $u\in E_j(x)\cap \mathbb{S}^{k-1}$, where $\mathbb{S}^{k-1}$ is the unit sphere in the tangent space $T_x M$, then by Egorov's Theorem there exists $\Gamma_1\subset\Gamma$ with $\mu(\Gamma_1)\ge 1-\frac{\varepsilon}{2}$ such that the above limit is uniform respect to $x\in \Gamma_1$ and $u\in E_j(x)\cap \mathbb{S}^{k-1}$. Therefore, for any $\delta>0$ there exist $N_1>0$ such that for any $l\ge N_1$, each $1\le i\le k, 1\le j\le s(x)$, $x\in \Gamma_1$ and $u\in E_j(x)\cap \mathbb{S}^{k-1}$,
\begin{equation}\label{power2}
l(\lambda_{i,j}(x)-\delta)\le\log\|Df_i^l(x)u\|\le l(\lambda_{i,j}(x)+\delta).
\end{equation}

Fix $l\ge N_1$. Let $\widetilde{\varphi}$ be the corresponding random $\mathbb{Z}^k$-action over $(\widetilde{\Omega}, \widetilde{\mathcal{A}}, \widetilde{\mathbf{P}}_{m}, \widetilde{\sigma})$ and $\widetilde{\Phi}$ be the corresponding skew product over $(\widetilde{\Omega}, \widetilde{\sigma})$ as in the beginning of the proof. Let
$$
\widetilde{\Lambda}_1=\{(\widetilde{\omega},x)\in \widetilde{\Lambda}:x\in \Gamma_1\}
$$ and let $\chi_{\widetilde{\Lambda}-\widetilde{\Lambda}_1}$ be the characteristic function of $\widetilde{\Lambda}-\widetilde{\Lambda}_1$.
By Birkhoff's Ergodic Theorem, for each $1\le i\le k$, the function
\begin{eqnarray*}
\widetilde{\chi}_i(\widetilde{\omega},x):&=&\lim_{n\longrightarrow +\infty}\frac{1}{n}\sum_{s=0}^{n-1}\chi_{\widetilde{\Lambda}-\widetilde{\Lambda}_1}(\widetilde{\Phi}^s(\widetilde{\omega},x))\\
&=&\lim_{n\longrightarrow +\infty}\frac{1}{n}\text{card}\{0\le s\le n-1: \widetilde{\Phi}^s(\widetilde{\omega},x)\in \widetilde{\Lambda}-\widetilde{\Lambda}_1\}
\end{eqnarray*}
is defined for $\widetilde{\mathbf{P}}_{m}\times \mu$ a.e. $(\widetilde{\omega}, x)$. Moreover,
\begin{eqnarray*}
\varepsilon &\geq&\widetilde{\mathbf{P}}_{m}\times \mu(\widetilde{\Lambda}-\widetilde{\Lambda}_1)\\
&=&\int_{\widetilde{\Lambda}} \chi_{\widetilde{\Lambda}-\widetilde{\Lambda}_1}d\widetilde{\mathbf{P}}_{m}\times \mu\\
&=&\int_{\widetilde{\Lambda}} \widetilde{\chi}_id\widetilde{\mathbf{P}}_{m}\times \mu\\
&\geq& \int_{\{(\widetilde{\omega},x)\in \widetilde{\Lambda}:\widetilde{\chi}_i(\widetilde{\omega},x)>\sqrt{\varepsilon}\}} \widetilde{\chi}_i\widetilde d{\mathbf{P}}_{m}\times \mu\\
&>& \sqrt{\varepsilon} \cdot \widetilde{\mathbf{P}}_{m}\times \mu\big(\{(\widetilde{\omega},x)\in \widetilde{\Lambda}:\widetilde{\chi}_i(\widetilde{\omega},x)>\sqrt{\varepsilon}\}\big),
\end{eqnarray*}
and hence
$$
\widetilde{\mathbf{P}}_{m}\times \mu\big(\{(\widetilde{\omega},x)\in \widetilde{\Lambda}:\widetilde{\chi}_i(\widetilde{\omega},x)\le\sqrt{\varepsilon}\}\big)\geq 1-\sqrt{\varepsilon}.
$$
By Egorov's Theorem, there exist $\widetilde{\Lambda}_2\subset\widetilde{\Lambda}$ with $\widetilde{\mathbf{P}}_{m}\times \mu(\widetilde{\Lambda}_2)>1-\frac{\varepsilon}{2}$ and $N_2>0$ such that  for each $1\le i\le k, (\widetilde{\omega},x)\in \widetilde{\Lambda}_2$ and any $n>N_2$,
\begin{equation}\label{frequency}
\text{card}\{0\le s\le n-1: \widetilde{\Phi}^s(\widetilde{\omega},x)\in \widetilde{\Lambda}-\widetilde{\Lambda}_1\}\le 2n\sqrt{\varepsilon}.
\end{equation}

 Given $(\widetilde{\omega}, x)\in \widetilde{\Lambda}$ and $1\leq j\leq s(x)$, we denote $\lambda_{\widetilde{\omega}, j}(x)=l\lambda_{i,j}(x)$ if $\widetilde{\omega}_0=f_i^l$. Then by (\ref{power2}) and (\ref{frequency}), for $(\widetilde{\omega},x)\in \widetilde{\Lambda}_2$, $u\in E_j(x)\cap \mathbb{S}^{k-1}$ and $n>N_2$,
\begin{eqnarray}\label{power3}
&&\sum_{r=0}^{n-1}\Big(\lambda_{\widetilde{\sigma}^r(\widetilde{\omega}), j}(x)-l\delta\Big)-2ln\sqrt{\varepsilon}\log(\|T\|)\notag\\
&\le&\log\|D\widetilde{\varphi}_{\widetilde{\omega}}^n(x)u\|\\
&\le& \sum_{r=0}^{n-1}\Big(\lambda_{\widetilde{\sigma}^r(\widetilde{\omega}), j}(x)+l\delta\Big)+2ln\sqrt{\varepsilon}\log(\|T\|),\notag
\end{eqnarray}
where $\|T\|=\max_{1\le i\le k}\|f_i\|$.

Now for any $x\in \Gamma$ which satisfies that $(\widetilde{\omega},x)\in \widetilde{\Lambda}$ for $\widetilde{\mathbf{P}}_{m}$-a.e. $\widetilde{\omega}$ and each $1\leq j\leq s(x)$, define a function $\eta^{(j)}_x:\widetilde{\Omega}\longrightarrow \mathbb{R}$ by
$$
\eta^{(j)}_x(\widetilde{\omega}):=\left\{\begin{array}{ll}
\lambda_{\widetilde{\omega}, j}(x) \quad &\;\;(\widetilde{\omega},x)\in \widetilde{\Lambda}_2\\
1 \quad &\;\;(\widetilde{\omega},x)\notin \widetilde{\Lambda}_2.
\end{array}\right.
$$
 Clearly, it is $\widetilde{\mathbf{P}}_{m}$-integrable.
Since $\widetilde{\mathbf{P}}_{m}$ is $\widetilde{\sigma}$-ergodic, by Birkhoff's Ergodic Theorem, we have that for $\widetilde{\mathbf{P}}_{m}$-almost all $\widetilde{\omega}\in \widetilde{\Omega}$,
\begin{equation}\label{power4}
\lim_{n\rightarrow\infty}\frac{1}{n}\sum_{r=0}^{n-1}\widetilde{\eta}_x(\widetilde{\sigma}^r
\widetilde{\omega})
=\int_{\widetilde{\Omega}}\eta^{(j)}_x(\widetilde{\omega})d\widetilde{\mathbf{P}}_m
(\widetilde{\omega}).
\end{equation}
Note that
\begin{equation}\label{power5}
l\cdot\big(\sum_{i=1}^km_i\lambda_{i,j}(x)-\frac{\varepsilon}{2}\log(\|T\|)\big)\le\int_{\widetilde{\Omega}}\eta^{(j)}_x(\widetilde{\omega})d\widetilde{\mathbf{P}}_m
(\widetilde{\omega})
\le l\cdot\big(\sum_{i=1}^km_i\lambda_{i,j}(x)+\frac{\varepsilon}{2}\log(\|T\|)\big).
\end{equation}
Combining (\ref{power1}), (\ref{power}), (\ref{power3}), (\ref{power4}) and (\ref{power5}), we have that for any $x\in \Gamma$ which satisfies that $(\omega,x)\in \Lambda$ for $\mathbf{P}_{m}$-a.e. $\omega$ and each $1\leq j\leq s(x)$
$$
\sum_{i=1}^k m_i (\lambda_{i,j}(x)-\delta)-(\frac{\varepsilon}{2}+2\sqrt{\varepsilon})\log(\|T\|)\le \lambda_j(\omega, x)\le \sum_{i=1}^k m_i  (\lambda_{i,j}(x)+\delta)+(\frac{\varepsilon}{2}+2\sqrt{\varepsilon})\log(\|T\|).
$$
Since $\varepsilon$ and $\delta$ can be taken arbitrarily small, we have that for $\mathbf{P}_{m}\times \mu$ a.e. $(\omega, x)$,
$$
\lambda_j(\omega, x)= \sum_{i=1}^k m_i  \lambda_{i,j}(x),
$$
i.e., the desired formula (\ref{exponent2}) holds.
\end{proof}

For a finite or countable Borel partition $\mathcal{P}$ of $M$,  the limit
\begin{equation}\label{hP}
h_{\mu}(\varphi,\mathcal{P}):=\lim_{n\rightarrow\infty}\frac{1}{n}\int_{\Omega}
H_{\mu}
\big(\bigvee_{i=0}^{n-1}(\varphi_{\omega}^i)^{-1}\mathcal{P}
\big)\;d\mathbf{P}_m(\omega),
\end{equation}
where $H_{\mu}(\mathcal{Q}):=-\sum\limits_{A\in\mathcal{Q}}\mu(A)\log\mu(A)$ for a finite or countable
partition $\mathcal{Q}$ of $M$, exists. The number
$$
h_{\mu}(\varphi):=\sup_{\mathcal{P}}h_{\mu}(\varphi,\mathcal{P}),
$$
where $\mathcal{P}$ ranges over all finite  or countable partitions of $M$, is called the
\emph{entropy} of $\varphi$ or the \emph{random entropy} of $T$. In fact, from \cite{Kifer2}, the
formula (\ref{hP}) remains true $\mathbf{P}_m$-a.s. without integrating against $\mathbf{P}_m$,
$$
h_{\mu}(\varphi,\mathcal{P})=\lim_{n\rightarrow\infty}\frac{1}{n}H_{\mu}
\big(\bigvee_{i=0}^{n-1}(\varphi_{\omega}^i)^{-1}\mathcal{P}
\big)
$$
for $\mathbf{P}_m$ a.e. $\omega\in \Omega$. Hence,
\begin{equation}\label{hP1}
h_{\mu}(\varphi)=\sup_{\mathcal{P}}\lim_{n\rightarrow\infty}\frac{1}{n}
H_{\mu}
\big(\bigvee_{i=0}^{n-1}(\varphi_{\omega}^i)^{-1}\mathcal{P}
\big)=h_{\mu}(\{\varphi_{\sigma^i\omega}\}_{i\in \mathbb{Z}_+})
\end{equation}
for $\mathbf{P}_m$ a.e. $\omega\in \Omega$.

Similar to (\ref{Rue}), (\ref{Pes}) and (\ref{Dim}) in the deterministic case, there are analogues of Ruelle's inequality, Pesin's formula and dimension formula for the random dynamical systems which reveal the relationship between random entropy and random Lyapunov exponents (see \cite{Liu} and \cite{Qian}), i.e., when $T$ is $C^1$,
\begin{equation}\label{inequality}
h_{\mu}(\varphi)\leq\int_{\Omega\times M}\sum_{\lambda_j(\omega, x)>0} d_j(x)\lambda_j(\omega, x)d\mathbf{P}_{m}\times \mu(\omega, x),
\end{equation}
and if $T$ is $C^2$ and $\mu$ is an SRB measure with respect to $\varphi$ (i.e., $\mu$ has the absolutely continuous conditional measures on the unstable manifolds of $\varphi$), then the equality in (\ref{inequality}) holds, moreover, when there is no hypothesis of absolute continuity on $\mu$, we have a so-called the Dimension formula,
$$
h_{\mu}(\varphi)=\int_{\Omega\times M} \sum_{\lambda_j(\omega, x)>0}\gamma_j(x)\lambda_j(\omega, x)d\mathbf{P}_{m}\times \mu(\omega, x),
$$
where $\gamma_j(x)$ is the dimension of $\mu$ in the direction of the subspace $E_j(x)$. Hence by Theorem \ref{thm1}, we have the following results immediately.

\begin{corollary}\label{coro}
Under the assumptions of Theorem 2.1, we have the following results.

(1) (Ruelle's inequality)
When $T$ is $C^1$, we have
\begin{equation}\label{Ruelle's inequality2}
h_{\mu}(\varphi)\leq\int_M\max_{J\subset \{1,\cdots,s(x)\}}\sum_{j\in J}\sum_{i=1}^k m_i d_j(x)\lambda_{i,j}(x)d\mu(x).
\end{equation}

(2)  (Pesin's formula)
If $T$ is $C^2$ and $\mu$ is an SRB measure with respect to $\varphi$, then the equality in (\ref{Ruelle's inequality2}) holds, i.e.,
\begin{equation}\label{Pesin's formula2}
h_{\mu}(\varphi)=\int_M\max_{J\subset \{1,\cdots,s(x)\}}\sum_{j\in J}\sum_{i=1}^k m_i d_j(x)\lambda_{i,j}(x)d\mu(x).
\end{equation}

(3)  (Dimension formula)
If $T$ is $C^2$ and there is no additional assumption of absolute continuity on $\mu$, then
\begin{equation}\label{Dimension formula2}
h_{\mu}(\varphi)=\int_M\max_{J\subset \{1,\cdots,s(x)\}}\sum_{j\in J}\sum_{i=1}^k m_i \gamma_j(x)\lambda_{i,j}(x)d\mu(x).
\end{equation}

\end{corollary}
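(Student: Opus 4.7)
The plan is to deduce all three statements directly from the random-dynamical-systems versions of Ruelle's inequality, Pesin's formula, and the dimension formula already recalled in the excerpt (the inequality (\ref{inequality}) and its two companions), by substituting the formula for the random Lyapunov exponents provided by Theorem \ref{thm1}.

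First I would invoke Theorem \ref{thm1}: for $\mathbf{P}_m\times\mu$-a.e.\ $(\omega,x)$ and every $1\le j\le s(x)$, one has $\lambda_j(\omega,x)=\sum_{i=1}^k m_i\lambda_{i,j}(x)$. The essential point is that the right-hand side depends on $x$ but not on $\omega$, so after substitution the integrands appearing in (\ref{inequality}) and in its SRB-equality and dimension-formula counterparts lose all $\omega$-dependence. Consequently, integrating out $\omega$ against $\mathbf{P}_m$ contributes only a factor of $1$, and the integrals on $\Omega\times M$ collapse to integrals on $M$.

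Next I would take care of the summation range. The sum in (\ref{inequality}) is taken over those indices $j$ with $\lambda_j(\omega,x)>0$, which by Theorem \ref{thm1} is equivalent to $\sum_{i=1}^k m_i\lambda_{i,j}(x)>0$. Since every term $d_j(x)\sum_{i=1}^k m_i\lambda_{i,j}(x)$ is either strictly positive or nonpositive, the sum
\[
\sum_{\sum_i m_i\lambda_{i,j}(x)>0} d_j(x)\sum_{i=1}^k m_i\lambda_{i,j}(x)
\]
equals the maximum of $\sum_{j\in J}\sum_{i=1}^k m_i d_j(x)\lambda_{i,j}(x)$ over all subsets $J\subset\{1,\dots,s(x)\}$, which is precisely the integrand on the right-hand side of (\ref{Ruelle's inequality2}). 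This proves part (1). Repeating the same substitution in the random Pesin's formula gives part (2), and in the random dimension formula (with $d_j(x)$ replaced by $\gamma_j(x)$) gives part (3).

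There is no real obstacle here; the only point that deserves care is checking that the SRB hypothesis in Corollary \ref{coro}(2) genuinely fits the framework of the random Pesin formula of \cite{Liu,Qian} (i.e.\ that $\mu$ being SRB with respect to $\varphi$ is the correct notion of absolute continuity of conditional measures along random unstable manifolds for the skew product $\Phi$), and similarly for the dimension formula of part (3). Once those hypotheses are matched to the cited references, Theorem \ref{thm1} supplies the algebraic identity that converts the random-exponent formulas into the generator-exponent formulas claimed.
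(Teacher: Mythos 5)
Your proposal is correct and follows essentially the same route as the paper, which derives the corollary immediately by substituting the formula of Theorem \ref{thm1} into the random Ruelle inequality, Pesin formula, and dimension formula of \cite{Liu} and \cite{Qian}, the sum over indices with positive exponent becoming the maximum over subsets $J$ exactly as you observe.
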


\begin{remark}
In \cite{Zhu}, (\ref{Ruelle's inequality2}) and (\ref{Pesin's formula2}) in Corollary \ref{coro} are also obtained (under stronger assumptions) by applying the adapted techniques of Hu (\cite{Hu}) and Ruelle (\cite{Ruelle}) and M$\tilde{a}$n$\acute{e}$ (\cite{Mane}) to the random case. The proof in \cite{Zhu} is in a sense self-contained and elaborated. But now after getting the formula (\ref{exponent2}),  we can directly obtain (\ref{Ruelle's inequality2}) and (\ref{Pesin's formula2}) by substitution. Furthermore, we can see an application of the ``random" entropy formula (\ref{Pesin's formula2}) to the ``deterministic" Friedland's entropy of $T$ in \cite{Zhu}. 
\end{remark}

\section{Directional Lyapunov exponents and directional entropy}

Subdynamics, especially expansive subdynamics, of a $\mathbb{Z}^k$-action are systematically investigated by Boyle and Lind \cite{Boyle}.  In particular, when consider one dimensional subdynamics, the directional entropy is studied. Roughly speaking, the directional entropy is a quantity to measure the complexity of the  sub-dynamical system along a direction. Directional entropy of $\mathbb{Z}^k$-actions was introduced by Milnor \cite{Milnor} to investigate the dynamics of the cellular automata, and then has been studied extensively (see for example \cite{Boyle}, \cite{Park} and \cite{Robinson}, etc). In \cite{Boyle}, it was shown that the directional entropies, including measure theoretic and topological versions, are continuous within an expansive component using the creative  techniques ``coding" and ``shading". However, it was not known how the directional entropy changes as the direction changes from one expansive component to another except for some special cases (see \cite{Park} and Remark 8.6 of \cite{Einsiedler}). As it was mentioned in the last section of \cite{Boyle}, ``\emph{continuity properties of directional entropy are still obscure}".

Let $T$ be a $C^1$ $\mathbb{Z}^k$-action with the generators as in (\ref{generator}) and $\mu$ a $T$-invariant Borel probability measure on $M$. In this section, we will introduce the directional Lyapunov exponents and express them in the Lyapunov exponents of the generators of $T$. Moreover, if $T$ is $C^2$, a formula of metric directional entropy for $T$ via the directional Lyapunov exponents is given, and hence the continuity of the directional entropy for $T$ is obtained.

Denote $\mathbb{S}^{k-1}$ the unit sphere in $\mathbb{R}^k$.
Given $\vec{v}=(v_1,\cdots,v_k)\in \mathbb{S}^{k-1}$. Let $L_{\vec{v}}$ be the one-dimensional subspace of  $\mathbb{R}^k$ which contains $\vec{v}$, i.e., the line on which $\vec{v}$ lies.
Let $\mathbb{Z}_+$ be the set of nonnegative integers.
Define a sequence of $\{\vec{m}_n\}_{n\in \mathbb{Z}_+}\subset
\mathbb{Z}^k$ as follows: choose $\vec{m}_n$ to be any integer vector in
$$
\{\vec{m}:|\vec{m}-n\vec{v}|=\displaystyle\min_{\vec{l}\in
\mathbb{Z}^k}|\vec{l}-n\vec{v}|\}
$$
with the smallest norm.
Obviously, for any $\vec{m}_n=(m_{n,1},\cdots,m_{n,k})$,
$\vec{m}_{n'}=(m_{n',1},\cdots,m_{n',k})$ and $1\le i\le k$,
$$
n<n'\Longrightarrow\left\{\begin{array}{ll}
m_{n,i}\leq m_{n',i}, \quad &\;\;v_i\ge 0\\
m_{n,i}\ge m_{n',i}, \quad &\;\;v_i\le 0.
\end{array}\right.
$$
Let $g_0=T^{\vec{m}_1},
g_n=T^{\vec{m}_n-\vec{m}_{n-1}}$ for $n\ge1$. Note that $g_n=$id when $\vec{m}_n=\vec{m}_{n-1}$. We call
$g^{\vec{v}}_{0,\infty}=\{g_n\}_{n\in \mathbb{Z}_+}$ is \emph{a
nonautonomous dynamical system along} $L_{\vec{v}}$ (in the direction of $\vec{v}$). Clearly, the sequence of $\{\vec{m}_n\}_{n\in \mathbb{Z}_+}$, and hence the nonautonomous dynamical system along $L_{\vec{v}}$,  may not be unique. However, we can see that for any $\{\vec{m}_n\}$, we have
\begin{equation}\label{limit}
\lim_{n\longrightarrow \infty}\Big(\frac{m_{n,1}}{\sum_{i=1}^k|m_{n,i}|},\cdots,\frac{m_{n,k}}{\sum_{i=1}^k|m_{n,i}|}\Big)=(v_1,\cdots,v_k).
\end{equation}

\begin{definition}\label{directional Lyapunov exponent}
Let $\vec{v}=(v_1,\cdots,v_k)\in \mathbb{S}^{k-1}$ and $g^{\vec{v}}_{0,\infty}=\{g_n\}_{n\in \mathbb{Z}_+}$ be a
nonautonomous dynamical system along $L_{\vec{v}}$. Then for any $x\in \Gamma$ and each $1\le j\le s(x)$, the limit
\begin{equation}\label{exponent}
\lim_{n\longrightarrow  \infty}\frac{1}{n}\log\|D(g_{n-1}\circ\cdots\circ g_0)(x)u\|,  u\in E_j(x)
\end{equation}
(in fact we can see in the proof of Theorem \ref{thm2} it exists and is independent of $u\in E_j(x)$ and the choice of $g^{\vec{v}}_{0,\infty}$) is called \emph{the $j$-th directional Lyapunov exponent of $T$ at $x$} in the direction of $\vec{v}$, and it is denoted
by $\lambda_j^{\vec{v}}(x)$.
\end{definition}

\begin{remark}
In Kalinin, Katok and Hertz's paper \cite{Kalinin} which investigates the nonuniform measure rigidity for $\mathbb{Z}^k$-actions, Lyapunov exponents for a $\mathbb{Z}^k$ action on $M$ are extended to a $\mathbb{R}^k$ action on the suspension manifold $S$, which is a bundle over $\mathbb{T}^k$ with fiber $M$. We can see that for any $\vec{v}=(v_1,\cdots,v_k)\in \mathbb{S}^{k-1}$, the Lyapunov exponents $\chi_j(\vec{v})$ in Proposition 2.1 of \cite{Kalinin} is just the direction Lyapunov exponents $\lambda_j^{\vec{v}}$ in our case.
\end{remark}

For any $\vec{v}=(v_1,\cdots,v_k)\in \mathbb{S}^{k-1}$, the directional entropy in $\vec{v}$ is defined as follows (see \cite{Boyle}, \cite{Milnor} and \cite{Park}).  For $t,r>0$, define
$$
L_{\vec{v}}^t(r)=\{\vec{u}\in\mathbb{R}_+^k:|\pi_{L_{\vec{v}}}\vec{u}|\leq r\mbox{ and
}|\pi_{L_{\vec{v}}^\perp}\vec{u}|\leq t\},
$$
where  $L_{\vec{v}}^\perp$ is the orthogonal complement of $L_{\vec{v}}$ and $\pi_{L_{\vec{v}}}$ (resp. $\pi_{L_{\vec{v}}^\perp}$) is the orthogonal
projection to $L_{\vec{v}}$ (resp. $L_{\vec{v}}^\perp$) along $L_{\vec{v}}^{\perp}$ (resp. $L_{\vec{v}}$). For a finite or countable Borel partition $\mathcal{P}$ of $M$, let
$$
h_{\mu}^{\vec{v}}(T, \mathcal{P})=\lim_{t\longrightarrow \infty}\limsup_{r\longrightarrow \infty}\frac{1}{r}H_{\mu}
\big(\bigvee_{\vec{n}\in L_{\vec{v}}^t(r)}T^{-\vec{n}}\mathcal{P}\big).
$$
The quantity
$$
h_{\mu}^{\vec{v}}(T)=\sup_{\mathcal{P}}h_{\mu}^{\vec{v}}(T, \mathcal{P}),
$$
where $\mathcal{P}$ ranges over all finite  or countable partitions of $M$, is called the \emph{directional entropy of $T$ in  $\vec{v}$}. There is an equivalent definition of  $h_{\mu}^{\vec{v}}(T)$ in \cite{Zhang} via the entropy of the  corresponding nonautonomous dynamical system. Precisely, let $g^{\vec{v}}_{0,\infty}=\{g_i\}_{i\in \mathbb{Z}_+}$ be any nonautonomous dynamical system along $L_{\vec{v}}$, then
\begin{equation}\label{entropy}
h_{\mu}^{\vec{v}}(T)=h_{\mu}(g^{\vec{v}}_{0,\infty})=\sup_{\mathcal{P}}h_{\mu}(g^{\vec{v}}_{0,\infty},\mathcal{P}),
\end{equation}
where
$$
h_{\mu}(g^{\vec{v}}_{0,\infty},\mathcal{P})=\limsup\limits_{n\rightarrow \infty}
 \frac{1}{n}H_{\mu}\Big(\bigvee_{i=0}^{n-1}(g_i\circ\cdots  \circ g_0)^{-1}\mathcal{P}\Big).
$$
For more information of entropy for nonautonomous dynamical system, we refer to \cite{Kolyada}, \cite{Zhu1} and \cite{Kawan}, etc.

\begin{theorem}\label{thm2}
Let $T$ be a $C^1$ $\mathbb{Z}^k$-action and $\mu$ a $T$-invariant Borel probability measure on $M$. Then for $\vec{v}=(v_1,\cdots,v_k)\in \mathbb{S}^{k-1}$, the following results hold.

(1)  For $\mu$ a.e. $x$ and each $1\le j\le s(x)$,
\begin{equation}\label{exponent1}
\lambda_j^{\vec{v}}(x)=\sum_{i=1}^{k}v_i\lambda_{i,j}(x).
\end{equation}

(2) (Ruelle's inequality)
\begin{equation}\label{Ruelle's inequality1}
h_{\mu}^{\vec{v}}(T)\le\int_M \max_{J\subset \{1,\cdots,s(x)\}}\sum_{j\in J}\sum_{i=1}^k v_i d_j(x)\lambda_{i,j}(x)d\mu(x).
\end{equation}

(3)  (Pesin's formula)
If $T$ is $C^2$ and $\mu$ is an SRB measure with respect to $\varphi$, then we have
\begin{equation}\label{Pesin's formula1}
h_{\mu}^{\vec{v}}(T)=\int_M \max_{J\subset \{1,\cdots,s(x)\}}\sum_{j\in J}\sum_{i=1}^k v_i d_j(x)\lambda_{i,j}(x)d\mu(x).
\end{equation}
Moreover, if $\mu$ is a general invariant measure, then we have the following dimension formula
\begin{equation}\label{Dimension formula1}
h_{\mu}^{\vec{v}}(T)=\int_M \max_{J\subset \{1,\cdots,s(x)\}}\sum_{j\in J}\sum_{i=1}^k v_i \gamma_j(x)\lambda_{i,j}(x)d\mu(x).
\end{equation}
And hence when $T$ is $C^2$, $h_{\mu}^{\vec{v}}(T)$ is continuous in $\vec{v}$.
\end{theorem}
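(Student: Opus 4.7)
The plan is to prove Part (1) first and then deduce Parts (2) and (3) by combining (1) with the identification (\ref{entropy}) of the directional entropy with the entropy of the nonautonomous system $g^{\vec{v}}_{0,\infty}$; continuity of $h_\mu^{\vec{v}}(T)$ in $\vec{v}$ will drop out of the dimension formula (\ref{Dimension formula1}) at the end.

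For Part (1), I would handle rational directions first and then extend by approximation. If $\vec{v} = \vec{p}/|\vec{p}|$ for some $\vec{p}\in\mathbb{Z}^k$, the definition of $\vec{m}_n$ gives $\vec{m}_n = \lfloor n/|\vec{p}|\rfloor \vec{p} + O(1)$, so $T^{\vec{m}_n}$ is essentially a power of the single diffeomorphism $T^{\vec{p}}$, and formula (\ref{Lya}) of Proposition \ref{prop} immediately yields $\lim_n \tfrac{1}{n}\log\|DT^{\vec{m}_n}(x)u\| = \tfrac{1}{|\vec{p}|}\sum_i p_i \lambda_{i,j}(x) = \sum_i v_i \lambda_{i,j}(x)$ for $u\in E_j(x)$. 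For irrational $\vec{v}$ I pick a rational $\vec{p}/|\vec{p}|$ close to $\vec{v}$ and apply the cocycle identity $DT^{\vec{m}_n}(x) = DT^{\vec{r}_n}(T^{\vec{s}_n}x)\circ DT^{\vec{s}_n}(x)$ with $\vec{s}_n = \lfloor n/|\vec{p}|\rfloor \vec{p}$ and $\vec{r}_n = \vec{m}_n - \vec{s}_n$: since $|\vec{r}_n|/n \to \|\vec{v} - \vec{p}/|\vec{p}|\|$, Egorov upgrades (\ref{Lya}) to uniform convergence on a set of measure close to $1$, and the intermediate base points leaving this set are controlled by a Birkhoff frequency argument just as in the proof of Theorem \ref{thm1}, so $\tfrac{1}{n}\log\|DT^{\vec{r}_n}(T^{\vec{s}_n}x)\|$ is bounded by this distance times $\max_i|\lambda_{i,j}(x)|$ up to a small error; letting $\vec{p}/|\vec{p}|\to\vec{v}$ produces (\ref{exponent1}). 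The same argument simultaneously shows that the limit in (\ref{exponent}) exists and is independent of $u\in E_j(x)$ and of the choice of the sequence $\{\vec{m}_n\}$.

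For Parts (2) and (3) I would exploit a correspondence with the random action of Theorem \ref{thm1}. Restricting first to $\vec{v}$ with $v_i\ge 0$, set $V = \sum_i v_i$ and $m_i = v_i/V$, and let $\varphi$ be the random $\mathbb{Z}^k$-action of Section 2 with this distribution. A typical $\omega$ gives $\varphi_\omega^n = T^{\vec{N}_n}$ with $\vec{N}_n/n \to (m_1,\ldots,m_k) = \vec{v}/V$, which is, up to a deterministic time rescaling by $V$, one admissible realization of $g^{\vec{v}}_{0,\infty}$; combining (\ref{entropy}) with (\ref{hP1}) gives $h_\mu^{\vec{v}}(T) = V\, h_\mu(\varphi)$. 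Substituting $m_i = v_i/V$ into the Ruelle inequality, Pesin formula and dimension formula from Corollary \ref{coro} and multiplying by $V$ produces exactly (\ref{Ruelle's inequality1}), (\ref{Pesin's formula1}) and (\ref{Dimension formula1}). The general case of mixed signs in $\vec{v}$ is handled identically after replacing each generator $f_i$ by $f_i^{\mathrm{sgn}(v_i)}$. Continuity of $h_\mu^{\vec{v}}(T)$ in $\vec{v}$ then follows at once from (\ref{Dimension formula1}) by dominated convergence, since for each $x$ the integrand is the maximum of finitely many linear functions of $\vec{v}$ and is dominated by $\sum_j d_j(x)\sum_i |\lambda_{i,j}(x)|\in L^1(\mu)$.

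The main obstacle is the uniform control needed in Part (1) for irrational $\vec{v}$. Egorov supplies uniform convergence of $\tfrac{1}{m}\log\|Df_i^m(y)u\|$ only on a set of measure $1-\varepsilon$, while the intermediate base points $T^{\vec{s}_n}x$ and $T^{\vec{r}_n}(T^{\vec{s}_n}x)$ arising in the cocycle decomposition are determined by $\vec{v}$ rather than sampled ergodically. This is handled by a Birkhoff frequency argument applied to each of the $f_i$-invariant measures $\mu$ separately, mirroring the corresponding step in the proof of Theorem \ref{thm1}; the difficulty is milder here because (\ref{limit}) already supplies the deterministic asymptotic $\vec{m}_n/n \to \vec{v}$ for free, bypassing the ergodic averaging of the random case. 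A secondary subtlety is justifying the identification $h_\mu^{\vec{v}}(T) = V h_\mu(\varphi)$ when $V$ is irrational, where one must verify that the random typical orbits align with $g^{\vec{v}}_{0,\infty}$ asymptotically in the sense required by (\ref{entropy}) and that the rescaling by $V$ commutes with the entropy limit.
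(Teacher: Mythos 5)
Your overall strategy matches the paper for parts (2) and (3): the paper also sets $m_i=v_i/\sum_i v_i$, identifies the directional entropy with the entropy of the nonautonomous system $g^{\vec v}_{0,\infty}$ via (\ref{entropy}), regards $g^{\vec v}_{0,\infty}$ as (a time-rescaling of) the system attached to a ``typical'' $\omega^*\in\Omega$, invokes (\ref{hP1}) to replace the $\omega^*$-entropy by $h_\mu(\varphi)$, substitutes Corollary \ref{coro}, and handles mixed signs exactly as you do, by passing to the generators $f_i^{\mathrm{sgn}(v_i)}$. The subtlety you flag at the end --- that the deterministic directional sequence is not literally $\mathbf P_m$-typical, so applying the a.s.\ statement (\ref{hP1}) to it and commuting the rescaling by $\sum_i v_i$ with the entropy limit needs justification --- is present in the paper's proof at exactly the same level of informality, so relative to the paper this is not a gap on your side (for the Lyapunov-exponent part it is harmless because the generators commute, so only letter frequencies of the word matter). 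Where you genuinely diverge is part (1): the paper deduces (\ref{exponent1}) from Theorem \ref{thm1} evaluated at $\omega^*$, while you prove it directly by rational approximation, writing $T^{\vec m_n}=T^{\vec r_n}\circ T^{\vec s_n}$ with $\vec s_n$ along a rational direction and $|\vec r_n|/n\to|\vec v-\vec p/|\vec p||$, and using (\ref{Lya}) for $T^{\vec p}$. This route is self-contained, works for every $x\in\Gamma$, avoids the random machinery entirely, and is in the spirit of the ``deterministic'' argument the paper itself sketches after the theorem (which there only reaches irrational directions inside expansive components); one remark, though: your Egorov-plus-Birkhoff control of the error block is both shaky (the intermediate base points $T^{\vec s_n}x$ lie on one deterministic orbit, not an ergodic sample) and unnecessary --- the crude uniform bound $\log\|DT^{\vec r_n}\|\le C|\vec r_n|$ with $C=\max_i\log\max(\|Df_i\|_\infty,\|Df_i^{-1}\|_\infty)$ already gives an error $O(|\vec v-\vec p/|\vec p||)$ which vanishes as the rational approximation improves, so you should drop the finer estimate in favor of this.
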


\begin{proof}

Given $\vec{v}=(v_1,\cdots,v_k)\in \mathbb{S}^{k-1}$.

\emph{Step 1}. For simplicity, we first assume that $\vec{v}$ lies in the first octant, i.e.,  $v_i\ge 0$ for each $1\le i\le k$.

Let $m$ be the probability measure on $\mathcal{G}$ with
\begin{equation}\label{mi}
m_i=m(f_i)=\frac{v_i}{\sum_{i=1}^kv_i}, 1\le i\le k.
\end{equation}
We then define a random $\mathbb{Z}^k$-action $\varphi_{\vec{v}}$ over $(\Omega, \mathcal{A}, \mathbf{P}_{m}, \sigma)$.

Let $g^{\vec{v}}_{0,\infty}=\{g_i\}_{i\in \mathbb{Z}_+}$ be a  nonautonomous dynamical system along $L_{\vec{v}}$. We can see that it corresponds to a typical element $\omega^*\in\Omega$. Let $\{(\varphi_{\vec{v}})_{\sigma^i\omega^*}\}_{i\in \mathbb{Z}_+}$ be the one-sided nonautonomous dynamical system of $\omega^*$. Please note that for large $n$, the composition of the first $n$ elements in $\{g_i\}_{i\in \mathbb{Z}_+}$ approximately corresponds to the composition of the first $\lceil n\cdot \sum_{i=1}^kv_i\rceil$ elements in $\{(\varphi_{\vec{v}})_{\sigma^i\omega^*}\}_{i\in \mathbb{Z}_+}$.

(1) From the above observation, we have, for any $x\in \Gamma$, each $1\le j\le s(x)$ and $u\in E_j(x)$,
\begin{eqnarray*}
&&\lim_{n\longrightarrow  \infty}\frac{1}{n}\log\|D(g_{n-1}\circ\cdots\circ g_0)(x)u\|\\
&=&\lim_{n\longrightarrow  \infty}\frac{\lceil n\cdot \sum_{i=1}^kv_i\rceil}{n}\cdot\frac{1}{\lceil n\cdot \sum_{i=1}^kv_i\rceil}\log\|D(\varphi_{\vec{v}})_{\omega^*}^{\lceil n\cdot \sum_{i=1}^kv_i\rceil}(x)u\|\\
&=&\lim_{n\longrightarrow  \infty}\frac{\lceil n\cdot \sum_{i=1}^kv_i\rceil}{n}\cdot\sum_{i=1}^k m_i \lambda_{i,j}(x)\;\;\;\;(\text{by }  (\ref{randomLya}) \text{ and } (\ref{exponent2}))\\
&=&\sum_{i=1}^k v_i \lambda_{i,j}(x).\;\;\;(\text{by }  (\ref{mi}))
\end{eqnarray*}
Therefore, the limit in (\ref{exponent}) exists and is independent of $u\in E_j(x)$ and the choice of $g^{\vec{v}}_{0,\infty}$. So the $j$-th directional Lyapunov exponent $\lambda_j^{\vec{v}}(x)$ of $T$ at $x$ in the direction of $\vec{v}$ equals $\sum_{i=1}^k v_i \lambda_{i,j}(x)$, i.e., (\ref{exponent1}) holds.

(2) Similarly, since $T$ is $C^1$  and $\mu$ is $T$-invariant,
\begin{eqnarray*}
h_{\mu}^{\vec{v}}(T)&=&h_{\mu}(g^{\vec{v}}_{0,\infty})\\
&=&\sum_{i=1}^kv_i\cdot h_{\mu}(\{(\varphi_{\vec{v}})_{\sigma^i\omega^*}\}_{i\in \mathbb{Z}_+})\\
&\le&\sum_{i=1}^kv_i\cdot\int_M\max_{J\subset \{1,\cdots,s(x)\}}\sum_{j\in J}\sum_{i=1}^k m_i d_j(x)\lambda_{i,j}(x)d\mu(x)\;\;\;(\text{by }  (\ref{hP1}) \text{ and } (\ref{Pesin's formula2}))\\
&=&\int_M\max_{J\subset \{1,\cdots,s(x)\}}\sum_{j\in J}\sum_{i=1}^k v_i d_j(x)\lambda_{i,j}(x)d\mu(x),\;\;\;(\text{by }  (\ref{mi}))
\end{eqnarray*}
i.e., the inequality (\ref{Ruelle's inequality1}) holds.

(3) If $T$ is $C^2$, then apply (\ref{Pesin's formula2}) and (\ref{Dimension formula2}) to the discussion in (2) we have the formula (\ref{Pesin's formula1}) and (\ref{Dimension formula1}) immediately.  And hence $h_{\mu}^{\vec{v}}(T)$ is continuous in $\vec{v}$.

\emph{Step 2}.  Now we consider any $\vec{v}=(v_1,\cdots,v_k)\in \mathbb{S}^{k-1}$ which does not lie in the first octant. Define a new vector  $\vec{v'}=(v'_1,\cdots,v'_k)\in \mathbb{S}^{k-1}$ with $v'_i=|v_i|, 1\le i\le k$.
 Clearly  $\vec{v'}$ lies in the first octant.

 Define a new $\mathbb{Z}^k$-action $T'$ with the collection of generators
\begin{equation}\label{generator1}
\mathcal{G}'=\{f'_i :1\leq i\leq k\},
\end{equation}
in which
$$
f'_i=\left\{\begin{array}{ll}
f_i, \quad &\;\;v_i\ge 0\\
f_i^{-1}, \quad &\;\;v_i< 0.
\end{array}\right.
$$
  Let $m'$ be the probability measure on $\mathcal{G}'$ with
$$
m'_i=m'(f'_i)=\frac{v'_i}{\sum_{i=1}^kv'_i}, 1\le i\le k.
 $$
We then define a corresponding random $\mathbb{Z}^k$-action $\varphi'_{\vec{v'}}$ over $(\Omega', \mathcal{A}', \mathbf{P}'_{m'}, \sigma')$.
Let $\{(\lambda'_{i,j}(x),d_j(x)) : x\in \Gamma, 1\le i\le k, 1\le j\le s(x)\}$ be the spectrum of $T'$ and $\{(\lambda'_j(\omega', x),d_j(x)) : \omega'\in \Omega', x\in \Gamma, 1\le j\le s(x)\}$ the spectrum of $\varphi'_{\vec{v'}}$.

By step 1, for the new system of $T'$, $\varphi'_{\vec{v'}}$ and $\vec{v'}$, the the corresponding results in (1)-(3) hold. Please note that for any $x\in \Gamma, 1\le i\le k, 1\le j\le s(x)$,
$$
v_i \lambda_{i,j}(x)=v'_i \lambda'_{i,j}(x),
$$
and hence the desired results in (1)-(3) hold immediately.

This completes the proof of Theorem \ref{thm2}.
\end{proof}

Recall that a line $L$ through the origin is called \emph{expansive} for $T$ if  there are $\varepsilon>0$ and $t>0$ such that
$$
\sup\big\{\rho(T^{\vec{n}}(x),T^{\vec{n}}(y)):\vec{n}\in \{\vec{u}\in \mathbb{R}^k: \text{dist}(\vec{u},L)\le t\}\big\}\le\varepsilon
$$
implies $x=y$, where $\rho$ is the metric on $M$. Let $\mathbb{G}_1$ be the Grassmann manifold of all 1-dimensional subspaces of $\mathbb{R}^k$ and denote $\mathbb{E}_1(T)=\{L\in \mathbb{G}_1:L \text{ is expansive for } T\}$. From \cite{Boyle}, $\mathbb{E}_1(T)$ is open and both of the metric directional entropy $h_{\mu}^{\vec{v}}(T)$ and the topological directional entropy $h^{\vec{v}}(T)$ are continuous at $\vec{v}$ with $L_{\vec{v}}\in \mathbb{E}_1(T)$.

When $T$ is $C^2$ and $\mu$ is absolutely continuous with respect to the Lebesgue measure, we can obtain the formula (\ref{Pesin's formula1}) on expansive components (especially on \emph{Weyl chambers} for the particular ${\mathbb{Z}}^k$-actions in \cite{Kalinin}) by a ``deterministic" method mentioned in \cite{Boyle} as follows.

When $\vec{v}=(v_1,\cdots,v_k)\in \mathbb{S}^{k-1}$ is \emph{rational}, i.e., the components $v_i, 1\leq i\leq
k$, of $\vec{v}$ are rationally related, we take a minimal integer $n$  such that $n\vec{v}$ is an integer vector. Let $g^{\vec{v}}_{0,\infty}=\{g_i\}_{i\in \mathbb{Z}_+}$ be a nonautonomous dynamical system in the direction of $\vec{v}$. By Theorem 4.7 of \cite{Zhang},
\begin{equation}\label{hmu1}
h_{\mu}^{\vec{v}}(T)=\frac{1}{n}h_{\mu}(T^{n\vec{v}}).
\end{equation}
Clearly, $T^{n\vec{v}}=g_n\circ\cdots\circ g_1=f_1^{nv_1}\circ\cdots\circ f_k^{nv_k}$, and hence, for any $x\in \Gamma$, $1\le j\le s(x)$, the $j$-th Lyapunov exponent of $T^{n\vec{v}}$ at $x$ is $\sum_{i=1}^knv_i\lambda_{i,j}(x)=n\lambda_j^{\vec{v}}(x)$. By this observation we get  (\ref{Pesin's formula1}) immediately.

When $\vec{v}=(v_1,\cdots,v_k)\in \mathbb{S}^{k-1}$ is irrational with $L_{\vec{v}}$ lying in an expansive component $\mathcal{C}$. Since $\mathcal{C}$ is open $h_{\mu}^{\vec{v}}(T)$ is continuous on $\mathcal{C}$, we can take a sequence of rational vectors $\{\vec{v}_n=(v_{n,1},\cdots,v_{n,k})\}_{n\in \mathbb{Z}_+}\subset \mathbb{S}^{k-1}$ such that $L_{\vec{v}_n}\in \mathcal{C}$ and $\vec{v}_n\longrightarrow \vec{v}$ as $n\longrightarrow \infty$. Therefore, we have
\begin{eqnarray}
h_{\mu}^{\vec{v}}(T)&=& \lim_{n\longrightarrow \infty}h_{\mu}^{\vec{v}_n}(T)\notag\\
&=&\lim_{n\longrightarrow \infty}\int_M \max_{J\subset \{1,\cdots,s(x)\}}\sum_{j\in J}\sum_{i=1}^k v_{n,i} d_j(x)\lambda_{i,j}(x)d\mu(x)\notag\\
&=&\int_M \max_{J\subset \{1,\cdots,s(x)\}}\sum_{j\in J}\sum_{i=1}^k v_i d_j(x)\lambda_{i,j}(x)d\mu(x),\notag
\end{eqnarray}
i.e.,  (\ref{Pesin's formula1}) holds.

However, when $\vec{v}=(v_1,\cdots,v_k)\in \mathbb{S}^{k-1}$ is irrational with $L_{\vec{v}}$ lying on the boundary of an expansive component, the above method does not work. This is the reason why we resort to the formulas in random dynamical systems to obtain the formulas in deterministic dynamical systems as in Theorem \ref{thm2}.

\begin{example}
Let $T$ be a linear ${\mathbb{Z}}^k$-action on the torus
$\mathbb{T}^d$ with the generators $\{f_i\}_{i=1}^k$ which are induced by integer  matrices $\{A_i\}_{i=1}^k$. Let $\{(\lambda_{i,j},d_j) :  1\le i\le k, 1\le j\le s\}$ be the spectrum of $T$ (in fact each $\lambda_{i,j}$ is the logarithm of the absolute value of an eigenvalue of $A_i$). It is easy to see that the Lebesgue measure $m_{Leb}$ is a measure with maximal directional entropy in any direction, i.e., $h^{\vec{v}}(T)=h_{m_{Leb}}^{\vec{v}}(T)$ for any $\vec{v}=(v_1,\cdots,v_k)\in \mathbb{S}^{k-1}$. Therefore, by Theorem \ref{thm2} we have that for linear ${\mathbb{Z}}^k$-actions on $\mathbb{T}^d$ the topological directional entropy $h^{\vec{v}}(T)$ is continuous in $\vec{v}$.
\end{example}

In the end of this section, we point out that the directional entropy $h_{\mu}^{\vec{v}}(T)$  is defined for any vector $\vec{v}\in {\mathbb{R}}^k$  in \cite{Boyle}. Since $h_{\mu}^{\vec{0}}(T)=0$ and the formula $h_{\mu}^{\vec{v}}(T)=|\vec{v}|\cdot h_{\mu}^{\frac{\vec{v}}{|\vec{v}|}}(T)$ holds for any nonzero vector $\vec{v}\in {\mathbb{R}}^k$, we can get the same results as that in this section for the directional entropy $h_{\mu}^{\vec{v}}(T)$ of any vector $\vec{v}\in {\mathbb{R}}^k$.


\begin{thebibliography}{99}

\bibitem {Boyle}   M. Boyle and D. Lind,\emph{ Expansive subdynamics}, Trans. Amer. Math. Soc., 349(1997),55-102.

\bibitem {Einsiedler} M. Einsiedler and D. Lind, \emph{Algebraic $\mathbb{Z}^d$-actions on entropy rank one}, Trans. Amer. Math. Soc., 356(5)(2004),179-1831.

\bibitem{Fried} D. Fried, \emph{Entropy for smooth abelian actions}, Proc. Amer. Math. Soc., 87
(1983),111-116.

\bibitem{Friedland} S. Friedland, \emph{Entropy of graphs, semi-groups and groups}, in: Ergodic Theory of
$\mathbb{Z}^d$-actions, M. Pollicott and K. Schmidt (eds.), London
Math. Soc. Lecture Note Ser. 228, Cambridge Univ. Press, Cambridge,
(1996),319-343.

\bibitem{Geller} W. Geller and M. Pollicott, \emph{An entropy for $\mathbb{Z} ^2$-actions with finite entropy generators}, Fund. Math., 157(1998),209-220.

\bibitem {Hu} H. Hu, \emph{Some ergodic properties of commuting diffeomorphisms}, Ergodic Theory Dynam. Systems, 13(1993),73-100.

\bibitem {Kalinin} B. Kalinin, A. Katok and F. Hertz, \emph{Nonuniform
measure rigidity}, Ann. of Math., 174(2)(2011),361-400.

\bibitem{Katok} A. Katok and  B. Hasselblatt, Introduction to the
modern theory of dynamical systems, Cambridge University Press, Cambridge, 1995.


\bibitem{Katok1} A. Katok, S. Katok and F. Hertz, \emph{The Fried average entropy and slow entropy for actions of higher rank abelian groups}, Geom. Funct. Anal., 24(4)(2014),1204-1228.

\bibitem{Katok2} A. Katok and J. Thouvenot, \emph{Slow entropy type invariants and smooth realization of commuting measure-preserving transformations}, Ann. Inst. H. Poincare, 33(3)(1997),323-338.

 \bibitem {Kawan}  C. Kawan, \emph{Metric entropy of nonautonomous dynamical systems}, Nonauto.  Stoch. Dynam. Syst., 1(2003),26-52.

\bibitem {Kifer} Y. Kifer, Ergodic Theory of Random Transformations, Birkhauser, Boston, 1986.

\bibitem {Kifer2} Y. Kifer and P.-D. Liu, \emph{Random dynamical systems},
in ``Handbook of Dynamical Systems, vol. 1B" (eds, B.Hasselblatt and A.Katok), Elsevier, (2006), 379-499.

\bibitem {Kolyada} S. Kolyada and L. Snoha, \emph{Topological entropy of nonautonomous dynamical
                 Systems}, Random Compu. Dynam., 1996,4(2,3),205-223.

\bibitem{Ledrappier-Young}F. Ledrappier and L.-Y. Young, \emph{The metric entropy of diffeomorphisms. Part I: Characterization of measures satisfying Pesin's formula. Part II: Relations between entropy, exponents and dimension}, Ann. of Math., 122(1985),509-574.

\bibitem {Liu01} P.-D. Liu, \emph{Dynamics of random transformations: smooth ergodic theory},
               Ergodic Theory Dynam. System, 21(2001),1279-1319.

\bibitem {Liu} P.-D. Liu and M. Qian, Smooth Ergodic Theory of Random Dynamical Systems,
                Lect. Notes in Math. 1606, Springer, New York, 1995.

\bibitem {Mane} R. M$\tilde{a}$n$\acute{e}$, Ergodic theory and differentiable dynamics, Springer, Berlin, 1987.

\bibitem{Milnor} J. Milnor, \emph{On the entropy geometry of cellular automata}, Complex Systems, 2(1988),357-386.

\bibitem{Park} K. Park, \emph{On directional entropy functions}, Israel J. Math., 113(1999),243-267.

\bibitem {Pesin}Y. Pesin, \emph{Lyapunov characteristic exponents and smooth ergodic theory}, Russ. Math. Surveys, 32(4)(1977),55-114.

\bibitem {Qian} M. Qian, M.-P. Qian, J. Xie, \emph{Entropy formula for random dynamical systems: relations between entropy, exponents and dimension}, Ergodic Theory Dynam. Systems,  23(6)(2003),1907¨C1931.

\bibitem{Robinson} E. Robinson and A. Sahin, \emph{Rank-one $\mathbb{Z}^d$ actions and directional entropy}, Ergodic Theory Dynam. Systems, 31(1)(2011),285-299.

\bibitem{Ruelle} D. Ruelle, \emph{An inequality for the entropy of differentiable maps}, Bol. Soc. Bras. Math., 9(1978),83-87.

\bibitem{Ruelle1} D. Ruelle, \emph{Statistical mechanics
on a compact set with $\mathbb{Z}^{\nu}$-action satisfying expansiveness and
specification}, Trans. Amer. Math. Soc., 185(1973),237-251.

\bibitem{Schmidt} K. Schmidt, Dynamical systems of algebraic origin,
Birkhauser-Verlag, New York, 1995.

\bibitem{Zhang}  J. Zhang and W. Zhang, \emph{On the directional entropy of $\mathbb{Z}_+^k$-actions}, Stochastics and Dynamics, 16(1)(2016),1650004.

\bibitem{Zhu} Y. Zhu, \emph{Entropy formula of random $\mathbb{Z}^k$-actions}, Trans. Amer. Math. Soc., 369(7)(2017), 4517-4544.

\bibitem{Zhu1} Y. Zhu, Z. Liu, X. Xu and W. Zhang, \emph{Entropy of Nonautonomous Dynamical
Systems}, J. Korean Math. Soc.,  49(2012),165-185.

\end{thebibliography}
\end{document}